\newtheorem{theorem}{Theorem}[section]
\theoremstyle{plain}
\newtheorem{corollary}{Corollary}[section]
\newtheorem{definition}{Definition}[section]
\newtheorem{lemma}{Lemma}[section]
\numberwithin{equation}{section}
\begin{document}
\title[A new approach on helices in pseudo-Riemannian manifolds]{A new
approach on helices in pseudo-Riemannian manifolds}
\urladdr{}
\author{Evren Z\i plar}
\address{Department of Mathematics, Faculty of Science, Cank\i r\i \
University, Cank\i r\i , Turkey}
\email{evrenziplar@karatekin.edu.tr}
\author{Yusuf Yayl\i }
\address{Department of Mathematics, Faculty of Science, University of
Ankara, Tando\u{g}an, Turkey}
\email{yayli@science.ankara.edu.tr}
\author{\.{I}smail G\"{O}K}
\address{Department of Mathematics, Faculty of Science, University of
Ankara, Tando\u{g}an, Turkey}
\email{igok@science.ankara.edu.tr}
\urladdr{}
\date{September 9, 2013. \ AMS subject classifications:\textbf{\ }14H45,
14H50, 53C50}
\keywords{Inclined curve, slant helices, harmonic curvature}

\begin{abstract}
In this paper, we give a definition of harmonic curvature functions in terms
of $V_{n}$ and define a new kind of slant helix which is called $V_{n}-$%
slant helix in $n-$dimensional pseudo-Riemannian manifold. Also, we give
important characterizations about the helix. \  \  \  \  \  \  \  \  \  \  \  \  \  \  \  \
\  \  \  \  \  \  \  \  \  \  \  \  \  \  \  \  \  \  \  \  \  \  \  \  \  \  \  \  \  \  \  \  \  \  \  \  \  \
\  \  \  \  \  \  \  \  \  \  \  \  \  \  \ 
\end{abstract}

\maketitle

\section{Introduction}

Curves theory is an important framework in the differential geometry
studies. Helix is one of the most fascinating curves because we see helical
structure in nature, science and mechanical tools. Helices arise in the
field of computer aided design, computer graphics, the simulation of
kinematic motion or design of highways, the shape of DNA and carbon
nonotubes. Also, we can see the helical structure in fractal geometry, for
instance hyperhelices \cite{Jain, Yin}.

Furthermore, helices share common origins in the geometries of the platonic
solids, with inherent hierarchical potential that is typical of biological
structures. The helices provide an energy-efficient solution to
close-packing in molecular biology, a common motif in protein construction,
and a readily observable pattern at many size levels throughout the body.
The helices are described in a variety of anatomical structures, suggesting
their importance to structural biology and manual therapy \cite{Scarr}.

A curve of constant slope or general helix in Euclidean 3-space $E^{3}$ is
defined by the property that its tangent vector field makes a constant angle
with a fixed straight line (the axis of general helix). A classical result
stated by Lancret in 1802 and first proved by de Saint Venant in 1845 (\cite%
{Lancret} and \cite{Struik}) is: A necessary and sufficient condition that a
curve be a general helix is that the ratio of curvature to torsion be
constant. In \cite{Haci}, \"{O}zdamar and Hac\i saliho\u{g}lu defined
harmonic curvature functions $H_{i}$ $\left( 1\leq i\leq n-2\right) $ of a
curve $\alpha $ and generalized helices in $E^{3}$ to in $n-$dimensional
Euclidean space $E^{n}$. \ Moreover, they gave a characterization for the
inclined curves in $E^{n}$ : 
\begin{equation}
\text{\textquotedblleft A curve is an inclined curve if and only if }\dsum
\limits_{i=1}^{n}H_{i}^{2}=\text{constant\textquotedblright }
\end{equation}

Harmonic curvature functions have important role in characterizations of
general helices in higher dimensions. Because, the notion of a general helix
can be generalized to higher dimension in different ways. However, these
ways are not easy to show which curves are general helices and finding the
axis of a general helix is complicated in higher dimension. Thanks to
harmonic curvature functions, we can easily obtain the axis of such curves.
Moreover, this way is confirmed in 3-dimensional spaces.

Izumiya and Takeuchi defined a new kind of helix (slant helix) and they gave
a characterization of slant helices in Euclidean $3-$space $E^{3}$ \cite%
{izumiya}. In 2008, \"{O}nder \emph{et al}. defined a new kind of slant
helix in Euclidean $4-$space $E^{4}$ which is called $B_{2}-$slant helix and
they gave some characterizations of these slant helices in Euclidean $4-$%
space $E^{4}$ \cite{onder} . And then in 2009, G\"{o}k \emph{et al}.
generalized $B_{2}-$slant helix in $E^{4}$ to $E^{n}$, $n>3$, called $V_{n}-$%
slant helix in Euclidean and Minkowski $n$-space (\cite{gok, gok1}). Lots of
authors in their papers have investigated inclined curves and slant helices
using the harmonic curvature functions in Euclidean and Minkowski $n$-space (%
\cite{ali, ali1, klah, oz, cam}). But, Z\i plar et al.(\cite{zip}) see for
the first time that the characterization of inclined curves and slant
helices in $(1.1)$ is true only for the case necessity but not true for the
case sufficiency in Euclian $n$-space. Then, they consider the
pre-characterizations about inclined curves and slant helices and
restructure them with the necessary and sufficient condition [17].

Similiar to the working in \cite{zip}, in this work, we define $V_{n}-$
slant helix and give characterizations about the helix with necessary and
sufficient condition in $n-$ dimensional pseudo-Riemannian manifolds for the
first time.

\section{Preliminaries}

In this section, we give some basic definitions from differential geometry.

\begin{definition}
A metric tensor $g$ on a smooth manifold $M$ is a symmetric non-degenerate
(0,2) tensor field on $M$.

In other words, $g\left( X,Y\right) =g\left( Y,X\right) $ for all $X,Y\in TM$
(tangent bundle) and at the each point $p$ of $M$ if $g\left(
X_{p},Y_{p}\right) =0$ for all $Y_{p}\in T_{p}\left( M\right) $ , then $%
X_{p}=0$ (non-degenerate), where $T_{p}\left( M\right) $ is the tangent
space of $M$ at the point $p$ and $g:T_{p}\left( M\right) \times T_{p}\left(
M\right) \rightarrow 
%TCIMACRO{\U{211d} }%
%BeginExpansion
\mathbb{R}
%EndExpansion
$ \cite{neill}.
\end{definition}

\begin{definition}
A pseudo-Riemannian manifold (or semi-Riemannian manifold) is a smooth
manifold $M$ furnished with a metric tensor $g$. That is, a
pseudo-Riemannian manifold is an ordered pair $\left( M,g\right) $ \cite%
{neill}.
\end{definition}

\begin{definition}
We shall recall the notion of a proper curve of order $n$ in a $n$%
-dimensional pseudo-Riemannian manifold $M$ with the metric tensor $g$. Let $%
\alpha :I\rightarrow M$ be a non-null curve in $M$ parametrized by the
arclength $s$, where $I$ is an open interval of the real line $%
%TCIMACRO{\U{211d} }%
%BeginExpansion
\mathbb{R}
%EndExpansion
$. We denote the tangent vector field of $\alpha $ by $V_{1}$. We assume
that $\alpha $ satisfies the following Frenet formula: 
\begin{eqnarray*}
\nabla _{V_{1}}V_{1} &=&k_{1}V_{2}, \\
\nabla _{V_{1}}V_{i} &=&-\varepsilon _{i-2}\varepsilon
_{i-1}k_{i-1}V_{i-1}+k_{i}V_{i+1},\text{ }1<i<n \\
\nabla _{V_{1}}V_{n} &=&-\varepsilon _{n-2}\varepsilon _{n-1}k_{n-1}V_{n-1},
\end{eqnarray*}%
where%
\begin{eqnarray*}
k_{1} &=&\left \Vert \nabla _{V_{1}}V_{1}\right \Vert >0 \\
k_{i} &=&\left \Vert \nabla _{V_{1}}V_{i}+\varepsilon _{i-2}\varepsilon
_{i-1}k_{i-1}V_{i-1}\right \Vert >0,\text{ \ }2\leq i\leq n-1 \\
\varepsilon _{j-1} &=&g\left( V_{j},V_{j}\right) \text{ }\left( =\pm
1\right) \text{ },\text{ }1\leq j\leq n,\text{on }I\text{, }
\end{eqnarray*}%
and $\nabla $ is Levi-Civita connection of $M$.

We call such a curve a proper curve of order $n$, $k_{i}$ $\left( 1\leq
i\leq n-1\right) $ its $i-th$ curvature and $V_{1},...,V_{n}$ its Frenet
Frame field.

Morever, let us recall that $\left \Vert X\right \Vert =\sqrt{\left \vert
g\left( X,X\right) \right \vert }$ for $X\in TM$, where $TM$ is the tangent
bundle of $M$ \cite{song}.
\end{definition}

\section{$V_{n}$-slant helices and their harmonic curvature functions}

In this section, we give definition of a $V_{n}-$slant helix curve in a $n-$%
dimensional pseudo-Riemannian manifold. Furthermore, we give
characterizations by using harmonic curvatures for $V_{n}-$slant helices.

\begin{definition}
\textbf{\ }Let $M$ be a $n$-dimensional pseudo-Riemannian manifold and let $%
\alpha \left( s\right) $ be a proper curve of order $n$ (non-null) with the
curvatures $k_{i}$ $\left( i=1,...,n-1\right) $ in $M$. Then, harmonic
curvature functions of $\alpha $ are defined by 
\begin{equation*}
H_{i}^{\ast }:I\subset 
%TCIMACRO{\U{211d} }%
%BeginExpansion
\mathbb{R}
%EndExpansion
\rightarrow 
%TCIMACRO{\U{211d}}%
%BeginExpansion
\mathbb{R}%
%EndExpansion
\end{equation*}%
along $\alpha $ in $M$, where%
\begin{eqnarray*}
H_{0}^{\ast } &=&0, \\
H_{1}^{\ast } &=&\varepsilon _{n-3}\varepsilon _{n-2}\frac{k_{n-1}}{k_{n-2}},
\\
H_{i}^{\ast } &=&\left( k_{n-i}H_{i-2}^{\ast }-\nabla _{V_{1}}H_{i-1}^{\ast
}\right) \frac{\varepsilon _{n-\left( i+2\right) }\varepsilon _{n-\left(
i+1\right) }}{k_{n-\left( i+1\right) }},\text{ }2\leq i\leq n-2\text{.}
\end{eqnarray*}%
Note that $\nabla _{V_{1}}H_{i-1}^{\ast }=V_{1}\left( H_{i-1}^{\ast }\right)
=H_{i-1}^{\ast \prime }$.
\end{definition}

\begin{definition}
Let $\left( M,g\right) $ be a $n$-dimensional pseudo-Riemannian manifold and
let $\alpha \left( s\right) $ be a proper curve of order $n$ (non-null).We
call $\alpha $ as a $V_{n}$-slant helix in $M$ if the function%
\begin{equation*}
g\left( V_{n},X\right)
\end{equation*}%
is a non-zero constant along $\alpha $ and $X$ is a parallel vector field
along $\alpha $ in $M$ (i.e. $\nabla _{V_{1}}X=0$). Here, $V_{n}$ is $n$-th
Frenet Frame field and $X\in TM$. Also, $X$ is called the axis of $\alpha $.
\end{definition}

\begin{lemma}
Let $\left( M,g\right) $ be a $n$-dimensional pseudo-Riemannian manifold and
let $\alpha \left( s\right) $ be a proper curve of order $n$ (non-null). Let
us assume that $H_{n-2}^{\ast }\neq 0$ for $i=n-2$. Then, $\varepsilon
_{n-3}H_{1}^{\ast 2}+\varepsilon _{n-4}H_{2}^{\ast 2}+...+\varepsilon
_{0}H_{n-2}^{\ast 2}$ is non-zero constant if and only if $V_{1}\left(
H_{n-2}^{\ast }\right) =H_{n-2}^{\ast \prime }=k_{1}H_{n-3}^{\ast }$, where $%
V_{1}$ and $\left \{ H_{1}^{\ast },...,H_{n-2}^{\ast }\right \} $ are the
unit tangent vector field and the harmonic curvatures of $\alpha $,
respectively.
\end{lemma}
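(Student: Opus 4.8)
The plan is to reduce the whole statement to computing the derivative of one function along $\alpha$ and factoring it. Set
\[
F := \varepsilon_{n-3}H_{1}^{\ast 2}+\varepsilon_{n-4}H_{2}^{\ast 2}+\cdots+\varepsilon_{0}H_{n-2}^{\ast 2}=\sum_{i=1}^{n-2}\varepsilon_{n-2-i}\,H_{i}^{\ast 2}.
\]
Since $F$ is a function on the connected interval $I$, it is constant if and only if $F^{\prime}=V_{1}(F)=0$. So the first step is to differentiate term by term,
\[
F^{\prime}=\sum_{i=1}^{n-2}2\,\varepsilon_{n-2-i}\,H_{i}^{\ast}\,H_{i}^{\ast\prime},
\]
and then try to collapse this sum to a single term using the recurrence from Definition~3.1.

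The key algebraic input is obtained by inverting that recurrence. Rearranging the defining relation for $H_{i}^{\ast}$ (valid for $2\le i\le n-2$) to solve for the derivative of the previous harmonic curvature, and recalling $\varepsilon_{k}^{2}=1$, gives
\[
H_{j}^{\ast\prime}=k_{n-(j+1)}H_{j-1}^{\ast}-\varepsilon_{n-(j+3)}\varepsilon_{n-(j+2)}\,k_{n-(j+2)}\,H_{j+1}^{\ast},\qquad 1\le j\le n-3,
\]
where I also use $H_{0}^{\ast}=0$. I would substitute this expression into the first $n-3$ summands of $F^{\prime}$, leaving the top summand $2\,\varepsilon_{0}H_{n-2}^{\ast}H_{n-2}^{\ast\prime}$ untouched, since $H_{n-2}^{\ast\prime}$ is exactly the quantity the Lemma is about and is not governed by the recurrence.

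After substitution, $F^{\prime}$ splits into two families of cross terms, one carrying products $H_{j}^{\ast}H_{j-1}^{\ast}$ and the other $H_{j}^{\ast}H_{j+1}^{\ast}$. Re-indexing one family to align it with the other, the coefficient of each interior product $H_{m}^{\ast}H_{m+1}^{\ast}$ from the two families turns out to be equal and opposite (this is precisely where $\varepsilon_{k}^{2}=1$ is used), so all interior cross terms cancel pairwise. The only survivor is the boundary term at the top index, which simplifies to $-2\,\varepsilon_{0}k_{1}H_{n-3}^{\ast}H_{n-2}^{\ast}$; combining it with the untouched last summand yields
\[
F^{\prime}=2\,\varepsilon_{0}\,H_{n-2}^{\ast}\left(H_{n-2}^{\ast\prime}-k_{1}H_{n-3}^{\ast}\right).
\]
Because $\varepsilon_{0}=g(V_{1},V_{1})=\pm 1\neq 0$ and $H_{n-2}^{\ast}\neq 0$ by hypothesis, the prefactor is nonvanishing, so $F^{\prime}=0$ holds if and only if $H_{n-2}^{\ast\prime}=k_{1}H_{n-3}^{\ast}$. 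This is the asserted equivalence, and the standing hypothesis $H_{n-2}^{\ast}\neq 0$ is what keeps $F$ from degenerating.

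I expect the main obstacle to be the bookkeeping in the telescoping step: in the pseudo-Riemannian setting the signs $\varepsilon_{k}$ appear explicitly, and one must track the shifted indices $n-(j+1),\,n-(j+2),\,n-(j+3)$ carefully to see the pairwise cancellation. I would also double-check the two ends of the summation, namely the $j=1$ term where $H_{0}^{\ast}=0$ removes a would-be stray term and the top index $j=n-3$ that produces the surviving boundary term, since an off-by-one slip there would corrupt the final factorization.
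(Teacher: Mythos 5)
Your proposal is correct and takes essentially the same route as the paper's proof: the paper also differentiates the sum, feeds in the recurrence (its Eqs.\ (3.2)--(3.3)) and cancels the cross terms telescopically, the only difference being that it writes out the two implications separately whereas you get both at once from the single factorization $F^{\prime}=2\varepsilon _{0}H_{n-2}^{\ast }\left( H_{n-2}^{\ast \prime }-k_{1}H_{n-3}^{\ast }\right)$, which is a tidier presentation of the same computation. One caveat you share with the paper: the argument only yields that the sum is \emph{constant}, and neither you nor the authors actually justify that this constant is non-zero (with mixed signs $\varepsilon _{j}=\pm 1$ the sum could in principle vanish even though $H_{n-2}^{\ast }\neq 0$), so on that point your write-up is no less rigorous than the original.
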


\begin{proof}
First, we assume that $\varepsilon _{n-3}H_{1}^{\ast 2}+\varepsilon
_{n-4}H_{2}^{\ast 2}+...+\varepsilon _{0}H_{n-2}^{\ast 2}$ is non-zero
constant. Consider the following functions given in Definition 3.1%
\begin{equation*}
H_{i}^{\ast }=\left( k_{n-i}H_{i-2}^{\ast }-H_{i-1}^{\ast \prime }\right) 
\frac{\varepsilon _{n-\left( i+2\right) }\varepsilon _{n-\left( i+1\right) }%
}{k_{n-\left( i+1\right) }}
\end{equation*}%
for $3\leq i\leq n-2$. So, from the equality, we can write%
\begin{equation}
k_{n-\left( i+1\right) }H_{i}^{\ast }=\varepsilon _{n-\left( i+2\right)
}\varepsilon _{n-\left( i+1\right) }\left( k_{n-i}H_{i-2}^{\ast
}-H_{i-1}^{\ast \prime }\right) \text{.}
\end{equation}%
Hence, in (3.1), if we take $i+1$ instead of $i$, we get%
\begin{equation}
\varepsilon _{n-\left( i+3\right) }\varepsilon _{n-\left( i+2\right)
}H_{i}^{\ast \prime }=\varepsilon _{n-\left( i+3\right) }\varepsilon
_{n-\left( i+2\right) }k_{n-\left( i+1\right) }H_{i-1}^{\ast }-k_{n-\left(
i+2\right) }H_{i+1}^{\ast },\text{ }2\leq i\leq n-3
\end{equation}%
together with%
\begin{equation*}
H_{1}^{\ast \prime }=-\frac{1}{\varepsilon _{n-4}\varepsilon _{n-3}}%
k_{n-3}H_{2}^{\ast }
\end{equation*}%
or 
\begin{equation}
H_{1}^{\ast \prime }=-\varepsilon _{n-4}\varepsilon _{n-3}k_{n-3}H_{2}^{\ast
}\text{.}
\end{equation}%
On the other hand, since $\varepsilon _{n-3}H_{1}^{\ast 2}+\varepsilon
_{n-4}H_{2}^{\ast 2}+...+\varepsilon _{0}H_{n-2}^{\ast 2}$ is constant, we
have%
\begin{equation*}
\varepsilon _{n-3}H_{1}^{\ast }H_{1}^{\ast \prime }+\varepsilon
_{n-4}H_{2}^{\ast }H_{2}^{\ast \prime }+...+\varepsilon _{0}H_{n-2}^{\ast
}H_{n-2}^{\ast \prime }=0
\end{equation*}%
and so, 
\begin{equation}
\varepsilon _{0}H_{n-2}^{\ast }H_{n-2}^{\ast \prime }=-\varepsilon
_{n-3}H_{1}^{\ast }H_{1}^{\ast \prime }-\varepsilon _{n-4}H_{2}^{\ast
}H_{2}^{\ast \prime }-...-\varepsilon _{1}H_{n-3}^{\ast }H_{n-3}^{\ast
\prime }\text{.}
\end{equation}%
By using (3.2) and (3.3), we obtain 
\begin{equation}
H_{1}^{\ast }H_{1}^{\ast \prime }=-\varepsilon _{n-4}\varepsilon
_{n-3}k_{n-3}H_{1}^{\ast }H_{2}^{\ast }
\end{equation}%
and%
\begin{equation}
\varepsilon _{n-\left( i+3\right) }\varepsilon _{n-\left( i+2\right)
}H_{i}^{\ast }H_{i}^{\ast \prime }=\varepsilon _{n-\left( i+3\right)
}\varepsilon _{n-\left( i+2\right) }k_{n-\left( i+1\right) }H_{i-1}^{\ast
}H_{i}^{\ast }-k_{n-\left( i+2\right) }H_{i}^{\ast }H_{i+1}^{\ast },\text{ }%
2\leq i\leq n-3\text{.}
\end{equation}%
Therefore, by using (3.4), (3.5) and (3.6), an algebraic calculus shows that 
\begin{equation*}
\varepsilon _{0}H_{n-2}^{\ast }H_{n-2}^{\ast \prime }=\varepsilon
_{0}k_{1}H_{n-3}^{\ast }H_{n-2}^{\ast }
\end{equation*}%
or%
\begin{equation*}
H_{n-2}^{\ast }H_{n-2}^{\ast \prime }=k_{1}H_{n-3}^{\ast }H_{n-2}^{\ast }%
\text{.}
\end{equation*}%
Since $H_{n-2}^{\ast }\neq 0$, we get the relation%
\begin{equation*}
H_{n-2}^{\ast \prime }=k_{1}H_{n-3}^{\ast }\text{.}
\end{equation*}

Conversely, we assume that%
\begin{equation}
H_{n-2}^{\ast \prime }=k_{1}H_{n-3}^{\ast }\text{.}
\end{equation}%
By using (3.7) and $H_{n-2}^{\ast }\neq 0$, we can write%
\begin{equation}
H_{n-2}^{\ast }H_{n-2}^{\ast \prime }=k_{1}H_{n-2}^{\ast }H_{n-3}^{\ast }%
\text{.}
\end{equation}%
From (3.6), we have the following equation sysytem:%
\begin{eqnarray*}
\text{for }i &=&n-3\text{, \  \  \  \  \ }\varepsilon _{1}H_{n-3}^{\ast
}H_{n-3}^{\ast \prime }=\varepsilon _{1}k_{2}H_{n-4}^{\ast }H_{n-3}^{\ast
}-\varepsilon _{0}k_{1}H_{n-3}^{\ast }H_{n-2}^{\ast }\text{,} \\
\text{for }i &=&n-4\text{, \  \  \  \  \ }\varepsilon _{2}H_{n-4}^{\ast
}H_{n-4}^{\ast \prime }=\varepsilon _{2}k_{3}H_{n-5}^{\ast }H_{n-4}^{\ast
}-\varepsilon _{1}k_{2}H_{n-4}^{\ast }H_{n-3}^{\ast }\text{,} \\
\text{for }i &=&n-5\text{, \  \  \  \  \ }\varepsilon _{3}H_{n-5}^{\ast
}H_{n-5}^{\ast \prime }=\varepsilon _{3}k_{4}H_{n-6}^{\ast }H_{n-5}^{\ast
}-\varepsilon _{2}k_{3}H_{n-5}^{\ast }H_{n-4}^{\ast }\text{,} \\
&&\cdot \\
&&\cdot \\
&&\cdot \\
\text{for }i &=&2\text{, \  \  \  \  \  \  \  \  \ }\varepsilon _{n-4}H_{2}^{\ast
}H_{2}^{\ast \prime }=\varepsilon _{n-4}k_{n-3}H_{1}^{\ast }H_{2}^{\ast
}-\varepsilon _{n-5}k_{n-4}H_{2}^{\ast }H_{3}^{\ast }\text{ .}
\end{eqnarray*}%
Moreover, from (3.5) and (3.8), we obtain%
\begin{equation}
\varepsilon _{n-3}H_{1}^{\ast }H_{1}^{\ast \prime }=-\varepsilon
_{n-4}k_{n-3}H_{1}^{\ast }H_{2}^{\ast }
\end{equation}%
and%
\begin{equation}
\varepsilon _{0}H_{n-2}^{\ast }H_{n-2}^{\ast \prime }=\varepsilon
_{0}k_{1}H_{n-2}^{\ast }H_{n-3}^{\ast }\text{.}
\end{equation}%
So, by using the above equation system and considering (3.9) and (3.10), an
algebraic calculus shows that%
\begin{equation}
\varepsilon _{n-3}H_{1}^{\ast }H_{1}^{\ast \prime }+\varepsilon
_{n-4}H_{2}^{\ast }H_{2}^{\ast \prime }+...+\varepsilon _{0}H_{n-2}^{\ast
}H_{n-2}^{\ast \prime }=0\text{.}
\end{equation}%
And, by integrating (3.11), we can easily say that%
\begin{equation*}
\varepsilon _{n-3}H_{1}^{\ast 2}+\varepsilon _{n-4}H_{2}^{\ast
2}+...+\varepsilon _{0}H_{n-2}^{\ast 2}
\end{equation*}%
is a non-zero constant. This completes the proof.
\end{proof}

\noindent \textbf{Proposition 3.1. }Let $\left( M,g\right) $ be a $n$%
-dimensional pseudo-Riemannian manifold and let $\alpha \left( s\right) $ be
a proper curve of order $n$ (non-null). If $\alpha $ is an $V_{n}$ -slant
helix in $M$, then we have%
\begin{equation}
g\left( V_{n-\left( i+1\right) },X\right) =H_{i}^{\ast }g\left(
V_{n},X\right) ,\text{ }i=0,1,...,n-2\text{,}
\end{equation}%
where $X$ is the axis of $\alpha $. Here, $\left \{
V_{1},V_{2},...,V_{n}\right \} $ denote the Frenet Frame of $\alpha $ and $%
\left \{ H_{1}^{\ast },H_{2}^{\ast },...,H_{n-2}^{\ast }\right \} $ denote
the harmonic curvature functions of the curve $\alpha $.

\begin{proof}
We will use the induction method. Let $i=1$. Since $X$ is the axis of the $%
V_{n}$- slant helix $\alpha $, we get%
\begin{equation*}
X=\lambda _{1}V_{1}+...+\lambda _{n}V_{n}\text{.}
\end{equation*}%
From the definition of $\ V_{n}$ -slant helix, we have%
\begin{equation}
g\left( V_{n},X\right) =\lambda _{n}\varepsilon _{n-1}=\text{constant.}
\end{equation}%
A differentiation in Eq. (3.13) and the Frenet formulas gives us that%
\begin{equation}
g\left( V_{n-1},X\right) =0\text{.}
\end{equation}%
Again, differentiation in Eq. (3.14) and the Frenet formulas give%
\begin{eqnarray*}
g\left( \nabla _{V_{1}}V_{n-1},X\right) &=&0\text{,} \\
-\varepsilon _{n-3}\varepsilon _{n-2}k_{n-2}g\left( V_{n-2},X\right)
+k_{n-1}g\left( V_{n},X\right) &=&0\text{,} \\
g\left( V_{n-2},X\right) &=&\varepsilon _{n-3}\varepsilon _{n-2}\frac{k_{n-1}%
}{k_{n-2}}g\left( V_{n},X\right) \text{,} \\
g\left( V_{n-2},X\right) &=&H_{1}^{\ast }g\left( V_{n},X\right) \text{,}
\end{eqnarray*}%
respectively. Hence, it is shown that the Eq. (3.12) is true for $i=1$.

We now assume the Eq. (3.12) is true for the first $i-1$. Then, we have%
\begin{equation}
g\left( V_{n-i},X\right) =H_{i-1}^{\ast }g\left( V_{n},X\right) \text{.}
\end{equation}%
A differentiation in Eq. (3.15) and the Frenet formulas give us that%
\begin{equation*}
-\varepsilon _{n-i-2}\varepsilon _{n-i-1}k_{n-i-1}g\left( V_{n-i-1},X\right)
+k_{n-i}g\left( V_{n-i+1},X\right) =\nabla _{V_{1}}H_{i-1}^{\ast }g\left(
V_{n},X\right) \text{.}
\end{equation*}%
Since we have the induction hypothesis, $g\left( V_{n-i+1},X\right)
=H_{i-2}^{\ast }g\left( V_{n},X\right) $, we get 
\begin{equation*}
\left( k_{n-i}H_{i-2}^{\ast }-\nabla _{V_{1}}H_{i-1}^{\ast }\right) \frac{%
\varepsilon _{n-\left( i+2\right) }\varepsilon _{n-\left( i+1\right) }}{%
k_{n-\left( i+1\right) }}g\left( V_{n},X\right) =g\left( V_{n-\left(
i+1\right) },X\right) \text{,}
\end{equation*}%
which gives%
\begin{equation*}
g\left( V_{n-\left( i+1\right) },X\right) =H_{i}^{\ast }g\left(
V_{n},X\right) \text{.}
\end{equation*}
\end{proof}

\begin{theorem}
Let $\left( M,g\right) $ be a $n$-dimensional pseudo-Riemannian manifold and
let $\alpha \left( s\right) $ be a proper curve of order $n$ (non-null).
Then, $\alpha $ is a $V_{n}$-slant helix in $M$ if and only if it satisfies
that%
\begin{equation*}
\sum \limits_{i=1}^{n-2}\varepsilon _{n-\left( i+2\right) }H_{i}^{\ast 2}
\end{equation*}%
is equal to non-zero constant and $H_{n-2}^{\ast }\neq 0$.
\end{theorem}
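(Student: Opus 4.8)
The plan is to prove both implications of the equivalence by combining Proposition 3.1 with Lemma 3.1, reducing the characterization of the $V_{n}$-slant helix to the constancy of $\sum_{i=1}^{n-2}\varepsilon_{n-(i+2)}H_{i}^{\ast 2}$.

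For the forward direction, suppose $\alpha$ is a $V_{n}$-slant helix with axis $X$. By Proposition 3.1 we have the family of relations $g(V_{n-(i+1)},X)=H_{i}^{\ast}\,g(V_{n},X)$ for $i=0,1,\dots,n-2$, where $g(V_{n},X)$ is a non-zero constant, say $c$. The key observation is that $X$ is a parallel vector field, so $g(X,X)$ is constant along $\alpha$ (since $\nabla_{V_{1}}g(X,X)=2g(\nabla_{V_{1}}X,X)=0$). I would then expand $X=\sum_{j=1}^{n}g(V_{j},X)\,\varepsilon_{j-1}V_{j}$ in the Frenet frame and compute $g(X,X)=\sum_{j=1}^{n}\varepsilon_{j-1}\,g(V_{j},X)^{2}$. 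Substituting the relations from Proposition 3.1 (noting $g(V_{n-1},X)=0$ from equation (3.14), and re-indexing $j=n-(i+1)$ so that $\varepsilon_{j-1}=\varepsilon_{n-(i+2)}$) turns this into
\begin{equation*}
g(X,X)=\varepsilon_{n-1}c^{2}+c^{2}\sum_{i=1}^{n-2}\varepsilon_{n-(i+2)}H_{i}^{\ast 2}.
\end{equation*}
Since the left side and $\varepsilon_{n-1}c^{2}$ are constant and $c\neq 0$, the sum $\sum_{i=1}^{n-2}\varepsilon_{n-(i+2)}H_{i}^{\ast 2}$ must be a (non-zero) constant. That $H_{n-2}^{\ast}\neq 0$ should also follow, since $H_{n-2}^{\ast}$ relates $g(V_{1},X)$ to $c$ and the axis $X$ cannot be orthogonal to the tangent everywhere for a genuine slant helix.

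For the converse, I would assume $\sum_{i=1}^{n-2}\varepsilon_{n-(i+2)}H_{i}^{\ast 2}$ is a non-zero constant with $H_{n-2}^{\ast}\neq 0$, and construct the axis explicitly. The natural candidate is
\begin{equation*}
X=c\left(\sum_{i=0}^{n-2}\varepsilon_{n-(i+2)}H_{i}^{\ast}V_{n-(i+1)}\right)
\end{equation*}
for a suitable constant $c$, the field whose inner products with the frame reproduce the relations of Proposition 3.1. The essential step is to verify $\nabla_{V_{1}}X=0$: differentiating $X$ using the Frenet formulas produces a telescoping collection of terms, and the coefficient of each $V_{k}$ vanishes precisely by the recursion defining the $H_{i}^{\ast}$ in Definition 3.1, except for the two boundary terms (those involving $V_{1}$ and $V_{n}$). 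Here Lemma 3.1 is exactly what closes the boundary: the constancy of $\sum\varepsilon_{n-(i+2)}H_{i}^{\ast 2}$ is equivalent to $H_{n-2}^{\ast\prime}=k_{1}H_{n-3}^{\ast}$, and this identity kills the residual $V_{1}$-coefficient, so $\nabla_{V_{1}}X=0$. Then $g(V_{n},X)=c\,\varepsilon_{n-1}H_{0}^{\ast}$ needs adjusting—more precisely one checks $g(V_{n},X)$ is a non-zero constant—which establishes that $\alpha$ is a $V_{n}$-slant helix.

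The main obstacle I anticipate is the bookkeeping in the converse: verifying that the constructed $X$ is genuinely parallel requires carefully tracking the index shifts and the sign factors $\varepsilon_{j}$ through the Frenet formulas, and confirming that every interior coefficient cancels by the defining recursion while the boundary term is handled by Lemma 3.1. The forward direction is comparatively routine once one recognizes that parallelism of $X$ gives $g(X,X)=\text{const}$; the real content is organizing the telescoping cancellation and recognizing that Lemma 3.1 provides exactly the one extra scalar identity needed to make the endpoint term disappear.
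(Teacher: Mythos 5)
Your overall strategy is the same as the paper's (forward direction via constancy of $g(X,X)$ from parallelism plus the frame expansion coming from Proposition 3.1; converse by building an explicit axis and using Lemma 3.1 to close the telescoping cancellation), but there are two genuine gaps. First, in the forward direction you never actually prove $H_{n-2}^{\ast}\neq 0$: your justification that ``the axis cannot be orthogonal to the tangent everywhere for a genuine slant helix'' is precisely the assertion that needs proof, not a reason for it. The paper proves it by a descent argument: if $H_{n-2}^{\ast}=0$, then $g(V_{1},X)=H_{n-2}^{\ast}g(V_{n},X)=0$; differentiating and using $\nabla_{V_{1}}X=0$ together with $k_{1}>0$ forces $g(V_{2},X)=0$, hence $H_{n-3}^{\ast}=0$ by Proposition 3.1; iterating pushes this all the way down to $H_{1}^{\ast}=0$, contradicting $H_{1}^{\ast}=\varepsilon_{n-3}\varepsilon_{n-2}\,k_{n-1}/k_{n-2}\neq 0$. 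Some argument of this kind is indispensable, since in a pseudo-Riemannian setting nothing cheaper rules out $g(V_{1},X)\equiv 0$.

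Second, and more seriously, your candidate axis in the converse is wrong: the field $X=c\sum_{i=0}^{n-2}\varepsilon_{n-(i+2)}H_{i}^{\ast}V_{n-(i+1)}$ has no $V_{n}$ component at all (the $i=0$ term is $\varepsilon_{n-2}H_{0}^{\ast}V_{n-1}=0$ because $H_{0}^{\ast}=0$ by Definition 3.1). Hence $g(V_{n},X)=0$, so this field can never witness the slant-helix condition, which requires $g(V_{n},X)$ to be a \emph{non-zero} constant; your closing remark that one ``checks $g(V_{n},X)$ is a non-zero constant'' is false for this $X$ and cannot be repaired without changing the field. Worse, the field is not even parallel: differentiating the $H_{1}^{\ast}V_{n-2}$ term produces a $k_{n-2}H_{1}^{\ast}V_{n-1}$ contribution, and it is exactly the derivative of a $V_{n}$ term, namely $-\varepsilon_{n-2}\varepsilon_{n-1}k_{n-1}V_{n-1}$, that cancels it via the definition $H_{1}^{\ast}=\varepsilon_{n-3}\varepsilon_{n-2}\,k_{n-1}/k_{n-2}$; with no $V_{n}$ term that $V_{n-1}$ coefficient survives. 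The paper's candidate is $X=\lambda_{n}V_{n}+\lambda_{n}\varepsilon_{n-1}\sum_{i=1}^{n-2}\varepsilon_{n-(i+2)}H_{i}^{\ast}V_{n-(i+1)}$ with $\lambda_{n}$ a non-zero constant; once you include the $V_{n}$ term, your telescoping-plus-Lemma-3.1 argument (the recursion killing interior coefficients, and $H_{n-2}^{\ast\prime}=k_{1}H_{n-3}^{\ast}$ killing the residual $V_{1}$ coefficient) does go through as in the paper.
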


\begin{proof}
Suppose $\alpha $ be a $V_{n}$ -slant helix. According to the Definition 3.2
and the proof of \ Proposition 3.1, 
\begin{equation}
g\left( V_{n},X\right) =\lambda _{n}\varepsilon _{n-1}=\text{constant,}
\end{equation}%
where $X$ the axis of $\alpha $. From Proposition 3.1., we have 
\begin{equation}
g\left( V_{n-\left( i+1\right) },X\right) =H_{i}^{\ast }g\left(
V_{n},X\right)
\end{equation}%
for $1\leq i\leq n-2$. Moreover, from (3.16) and Frenet formulas, we can
write%
\begin{equation*}
-\varepsilon _{n-2}\varepsilon _{n-1}k_{n-1}g\left( V_{n-1},X\right) =0\text{%
.}
\end{equation*}%
Since $-\varepsilon _{n-2}\varepsilon _{n-1}k_{n-1}$ is different from zero, 
$g\left( V_{n-1},X\right) =0$. It is known that the system $\left \{
V_{1},...,V_{n}\right \} $ is a basis of $\varkappa \left( M\right) $
(tangent bundle) along $\alpha $. Hence, $X$ can be expressed in the form%
\begin{equation}
X=\dsum \limits_{i=1}^{n}\lambda _{i}V_{i}\text{.}
\end{equation}%
Moreover, from (3.18), we get the system%
\begin{eqnarray*}
\varepsilon _{0}\lambda _{1} &=&g\left( X,V_{1}\right) \\
\varepsilon _{1}\lambda _{2} &=&g\left( X,V_{2}\right) \\
&&. \\
&&. \\
&&. \\
\varepsilon _{n-3}\lambda _{n-2} &=&g\left( X,V_{n-2}\right) \\
\varepsilon _{n-2}\lambda _{n-1} &=&g\left( X,V_{n-1}\right) =0 \\
\varepsilon _{n-1}\lambda _{n} &=&g\left( X,V_{n}\right)
\end{eqnarray*}%
by using the metric $g$. Therefore, from Proposition 3.1 and the above
system, we can see that the following system is true: 
\begin{eqnarray*}
\lambda _{1} &=&g\left( X,V_{1}\right) =\varepsilon _{0}H_{n-2}^{\ast
}g\left( X,V_{n}\right) \\
\lambda _{2} &=&g\left( X,V_{2}\right) =\varepsilon _{1}H_{n-3}^{\ast
}g\left( X,V_{n}\right) \\
&&. \\
&&. \\
&&. \\
\lambda _{n-2} &=&g\left( X,V_{n-2}\right) =\varepsilon _{n-3}H_{1}^{\ast
}g\left( X,V_{n}\right) \\
\lambda _{n-1} &=&g\left( X,V_{n-1}\right) =0 \\
\lambda _{n} &=&\varepsilon _{n-1}g\left( X,V_{n}\right) \text{.}
\end{eqnarray*}%
Thus, it can be easily obtained the axis of the curve $\alpha $ as%
\begin{equation}
X=g\left( X,V_{n}\right) \left \{ \dsum \limits_{i=1}^{n-2}H_{i}^{\ast
}V_{n-(i+1)}\varepsilon _{n-(i+2)}+\left( \varepsilon _{n-1}V_{n}\right)
\right \}
\end{equation}%
by making use of the equality (3.18) and the last system.

Therefore, from (3.19), we can write%
\begin{equation}
g\left( X,X\right) =\left[ g\left( X,V_{n}\right) \right] ^{2}\left(
\varepsilon _{0}^{3}H_{n-2}^{\ast 2}+...+\varepsilon _{n-3}^{3}H_{1}^{\ast
2}+\varepsilon _{n-1}^{3}\right) \text{.}
\end{equation}%
Morever, by the definition of metric tensor, we have%
\begin{equation*}
\left \vert g\left( X,X\right) \right \vert =\left \Vert X\right \Vert ^{2}%
\text{.}
\end{equation*}%
Since $\alpha $ is a $V_{n}$-slant helix, $\left \Vert X\right \Vert =$%
constant and $g\left( X,V_{n}\right) $ is non-zero constant along $\alpha $.
Hence, from (3.20), we obtain%
\begin{equation*}
\varepsilon _{0}^{3}H_{n-2}^{\ast 2}+...+\varepsilon _{n-3}^{3}H_{1}^{\ast
2}+\varepsilon _{n-1}^{3}
\end{equation*}%
is constant. In other words,%
\begin{equation*}
\varepsilon _{0}H_{n-2}^{\ast 2}+...+\varepsilon _{n-3}H_{1}^{\ast 2}=\dsum
\limits_{i=1}^{n-2}\varepsilon _{n-(i+2)}H_{i}^{\ast 2}
\end{equation*}%
is constant.

Now, we will show that $H_{n-2}^{\ast }\neq 0$. We assume that $%
H_{n-2}^{\ast }=0$. Then, for $i=n-2$ in (3.17), we have%
\begin{equation}
g\left( V_{1},X\right) =H_{n-2}^{\ast }g\left( X,V_{n}\right) =0\text{.}
\end{equation}%
If we take derivative in each part of (3.21) in the direction $V_{1}$ on $M$%
, then we have%
\begin{equation}
g\left( \nabla _{V_{1}}V_{1},X\right) +g\left( V_{1},\nabla _{V_{1}}X\right)
=0\text{.}
\end{equation}%
On the other hand, $\nabla _{V_{1}}X$ $=0$ since $\alpha $ is a $V_{n}-$%
slant helix. Then, from (3.22), we have%
\begin{equation*}
g\left( \nabla _{V_{1}}V_{1},X\right) =k_{1}g\left( V_{2},X\right) =0
\end{equation*}%
by using the Frenet formulas. Since $k_{1}$ is positive, it must be $g\left(
V_{2},X\right) =0$. Now, for $i=n-3$ in (3.17), 
\begin{equation*}
g\left( V_{2},X\right) =H_{n-3}^{\ast }\text{ }g\left( V_{n},X\right) \text{.%
}
\end{equation*}%
Since $g\left( V_{2},X\right) =0$ and $g\left( V_{n},X\right) \neq 0$, it
must be $H_{n-3}^{\ast }=0$. Continuing this process, we get $H_{1}^{\ast }=0
$. Let us recall that $H_{1}^{\ast }=\varepsilon _{n-3}\varepsilon _{n-2}%
\dfrac{k_{n-1}}{k_{n-2}}$, thus we have a contradiction because all the
curvatures are nowhere zero. Consequently, $H_{n-2}^{\ast }\neq 0$.

Conversely, we assume that $\dsum \limits_{i=1}^{n-2}\varepsilon
_{n-(i+2)}H_{i}^{\ast 2}=$constant and $H_{n-2}^{\ast }\neq 0$. We take the
vector field%
\begin{equation*}
X=\lambda _{n}V_{n}+\dsum \limits_{i=1}^{n-2}\lambda _{n}\varepsilon
_{n-1}\varepsilon _{n-\left( i+2\right) }H_{i}^{\ast }V_{n-\left( i+1\right)
}
\end{equation*}%
or%
\begin{equation*}
X=\lambda _{n}V_{n}+\lambda _{n}\varepsilon _{n-1}\dsum
\limits_{i=3}^{n}\varepsilon _{n-i}H_{i-2}^{\ast }V_{n-\left( i-1\right) }%
\text{,}
\end{equation*}%
where $\lambda _{n}$ is constant. We will show that it is parallel along $%
\alpha $, i.e. $\nabla _{V_{1}}X=0$. By direct calculation, we have%
\begin{eqnarray*}
\nabla _{V_{1}}X &=&\nabla _{V_{1}}\left( \lambda _{n}V_{n}\right) +\lambda
_{n}\varepsilon _{n-1}\dsum \limits_{i=3}^{n}\varepsilon _{n-i}\nabla
_{V_{1}}\left( H_{i-2}^{\ast }V_{n-\left( i-1\right) }\right) \\
&=&\lambda _{n}\nabla _{V_{1}}V_{n}+\lambda _{n}\varepsilon _{n-1}\dsum
\limits_{i=3}^{n}\varepsilon _{n-i}\left[ H_{i-2}^{\ast \prime }V_{n-\left(
i-1\right) }+H_{i-2}^{\ast }\nabla _{V_{1}}\left( V_{n-\left( i-1\right)
}\right) \right] \\
&=&\lambda _{n}\varepsilon _{n-1}[-\varepsilon _{n-2}k_{n-1}V_{n-1}+(\dsum
\limits_{i=3}^{n-1}\varepsilon _{n-i}H_{i-2}^{\ast \prime }V_{n-\left(
i-1\right) }-\varepsilon _{n-\left( i+1\right) }k_{n-i}V_{n-i}H_{i-2}^{\ast
}+ \\
&&k_{n-\left( i-1\right) }V_{n-\left( i-2\right) }H_{i-2}^{\ast }\varepsilon
_{n-i})+\varepsilon _{0}H_{n-2}^{\ast \prime }V_{1}+\varepsilon
_{0}k_{1}H_{n-2}^{\ast }V_{2}]\text{.}
\end{eqnarray*}%
Here, in the case $n=3$, we omit the term of sum.

On the other hand, by using (3.2), we can write%
\begin{equation}
\varepsilon _{n-\left( i+1\right) }\varepsilon _{n-i}H_{i-2}^{\ast \prime
}=\varepsilon _{n-\left( i+1\right) }\varepsilon _{n-i}k_{n-\left(
i-1\right) }H_{i-3}^{\ast }-k_{n-i}H_{i-1}^{\ast }
\end{equation}%
for $4\leq i\leq n-1$ together with (3.3). Moreover, from Lemma 3.1, we know
that%
\begin{equation}
H_{n-2}^{\ast \prime }=k_{1}H_{n-3}^{\ast }\text{.}
\end{equation}%
Therefore, by using (3.3), (3.23), (3.24) and by the definition of $%
H_{1}^{\ast }$, algebraic calculus shows that $\nabla _{V_{1}}X=0$. Besides, 
$g\left( V_{n},X\right) =\lambda _{n}\varepsilon _{n-1}$ is constant.
Consequently, $\alpha $ is a $V_{n}$-slant helix in $M$.
\end{proof}

\begin{corollary}
Let $\left( M,g\right) $ be a $n$-dimensional pseudo-Riemannian manifold and
let $\alpha \left( s\right) $ be a proper curve of order $n$ (non-null).
Then, $\alpha $ is a $V_{n}$-slant helix in $M$ if and only if $%
H_{n-2}^{\ast \prime }=k_{1}H_{n-3}^{\ast }$ and $H_{n-2}^{\ast }\neq 0$,
where $\left \{ H_{1}^{\ast },H_{2}^{\ast },...,H_{n-2}^{\ast }\right \} $
denote the harmonic curvature functions of the curve $\alpha $.
\end{corollary}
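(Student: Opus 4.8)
The plan is to derive this corollary directly by chaining Theorem 3.1 with Lemma 3.1, since between them they already contain every analytic ingredient and no fresh computation with the Frenet equations is needed. The first thing I would do is observe that the sum appearing in Theorem 3.1 is literally the same expression as the one treated in Lemma 3.1: writing out $\sum_{i=1}^{n-2}\varepsilon_{n-(i+2)}H_{i}^{\ast 2}$ term by term gives $\varepsilon_{n-3}H_{1}^{\ast 2}+\varepsilon_{n-4}H_{2}^{\ast 2}+\cdots+\varepsilon_{0}H_{n-2}^{\ast 2}$ (the index $i=1$ produces $\varepsilon_{n-3}H_{1}^{\ast 2}$, and $i=n-2$ produces $\varepsilon_{n-((n-2)+2)}=\varepsilon_{0}$, hence $\varepsilon_{0}H_{n-2}^{\ast 2}$), which is exactly the quantity Lemma 3.1 is about, merely listed in the opposite order. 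Once this identification is recorded, the two results speak about the same object and can be composed.

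For the forward implication I would assume $\alpha$ is a $V_{n}$-slant helix. Theorem 3.1 then hands me two facts simultaneously: that $\sum_{i=1}^{n-2}\varepsilon_{n-(i+2)}H_{i}^{\ast 2}$ is a non-zero constant, and that $H_{n-2}^{\ast}\neq 0$. With $H_{n-2}^{\ast}\neq 0$ in hand I am precisely in the hypothesis of Lemma 3.1, and the ``only if'' half of that lemma converts the constancy of the sum into the differential relation $H_{n-2}^{\ast\prime}=k_{1}H_{n-3}^{\ast}$. Combined with the already-known $H_{n-2}^{\ast}\neq 0$, this is exactly the conclusion sought.

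For the converse I would assume $H_{n-2}^{\ast\prime}=k_{1}H_{n-3}^{\ast}$ together with $H_{n-2}^{\ast}\neq 0$. The nonvanishing of $H_{n-2}^{\ast}$ again places me in the setting of Lemma 3.1, whose ``if'' half now turns the differential relation into the statement that $\sum_{i=1}^{n-2}\varepsilon_{n-(i+2)}H_{i}^{\ast 2}$ is a non-zero constant. Feeding this, together with $H_{n-2}^{\ast}\neq 0$, into the converse direction of Theorem 3.1 yields that $\alpha$ is a $V_{n}$-slant helix, which closes the equivalence.

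The only point demanding any care -- and hence the ``hard part'', though it is a light one -- is the bookkeeping with the hypothesis $H_{n-2}^{\ast}\neq 0$: it is needed both to invoke Lemma 3.1 and it reappears as one of the two conditions in Theorem 3.1, so I would carry it explicitly through both directions rather than absorb it silently. Apart from that, the corollary is a pure transitivity argument, and I would not expect to touch the connection $\nabla$, the Frenet frame, or the axis $X$ at all.
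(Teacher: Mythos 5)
Your proposal is correct and follows exactly the paper's own route: the paper's proof of this corollary is literally the single line ``It is obvious by using Lemma 3.1 and Theorem 3.1,'' and your argument is just that composition spelled out, including the index check identifying $\sum_{i=1}^{n-2}\varepsilon_{n-(i+2)}H_{i}^{\ast 2}$ with the sum in Lemma 3.1 and the careful tracking of the hypothesis $H_{n-2}^{\ast}\neq 0$ through both directions.
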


\begin{proof}
It is obvious by using Lemma 3.1. and Theorem 3.1.
\end{proof}

\end{document}